\newtheorem{theorem}{Theorem}[section]
\newtheorem{lemma}[theorem]{Lemma}
\newtheorem{example}[theorem]{Example}
\newtheorem{claim}{Claim}[theorem]
\begin{document}

\title{Revisiting Eisenstein-type criterion over integers}
\author{Akash Jena\footnote{The author is a final year student of the Integrated M.Sc. program in Mathematics. He is a recipient of INSPIRE student fellowship awarded by the Department of Science and Technology, Government of India.} \and Binod Kumar Sahoo}
\date{}
\maketitle

\begin{center}
School of Mathematical Sciences\\
National Institute of Science Education and Research, Bhubaneswar\\
HBNI, P.O.- Jatni, Dist- Khurda, Odisha - 752050, India\\
Emails: akash.jena@niser.ac.in, bksahoo@niser.ac.in
\end{center}

\begin{abstract}
The following result, a consequence of Dumas criterion for irreducibility of polynomials over integers, is generally proved using the notion of Newton diagram:
\begin{quote}
``Let $f(x)$ be a polynomial with integer coefficients and $k$ be a positive integer relatively prime to $\deg f(x)$. Suppose that there exists a prime number $p$ such that the leading coefficient of $f(x)$ is not divisible by $p$, all the remaining coefficients are divisible by $p^k$, and the constant term of $f(x)$ is not divisible by $p^{k+1}$. Then $f(x)$ is irreducible over $\mathbb{Z}$''.
\end{quote}
For $k=1$, this is precisely the Eisenstein criterion. The aim of this article is to give an alternate proof, accessible to the undergraduate students, of this result for $k\in \{2,3,4\}$ using basic divisibility properties of integers.\\

{\bf Keywords}: Irreducible polynomial, Eisenstein criterion, Dumas criterion, Newton diagram
\end{abstract}

\newpage

\section{Introduction}

Let $\mathbb{Z}[x]$ be the ring of polynomials with coefficients from the ring $\mathbb{Z}$ of integers. A nonconstant polynomial $f(x)\in\mathbb{Z}[x]$ is said to be {\it reducible} over $\mathbb{Z}$ if it can be written as a product of two nonconstant polynomials in $\mathbb{Z}[x]$, otherwise, $f(x)$ is called {\it irreducible} over $\mathbb{Z}$. There is no universal criterion which can be applied to determine the reducibility/irreducibility of all the polynomials in $\mathbb{Z}[x]$. However, many criteria exist in the literature each of which give this information for some particular class of polynomials. One such criterion, the so called ``Eisenstein criterion'', is due to Gotthold Eisenstein (1823--1852), a German mathematician. This is perhaps the most well-known criterion which gives a sufficient condition for a polynomial in $\mathbb{Z}[x]$ to be irreducible.
\begin{quote}
{\it Eisenstein criterion.} Let $f(x)$ be a polynomial in $\mathbb{Z}[x]$ of positive degree. Suppose that there exists a prime number $p$ such that the leading coefficient of $f(x)$ is not divisible by $p$, all the remaining coefficients are divisible by $p$, and the constant term is not divisible by $p^2$. Then $f(x)$ is irreducible over $\mathbb{Z}$.
\end{quote}
A polynomial satisfying the conditions of Eisenstein criterion for some prime is called an {\it Eisenstein polynomial}. In practice, it may happen that the original polynomial $f(x)$ is not Eisenstein for any prime, but the criterion is applicable (with respect to some prime) to the polynomial obtained after transforming $f(x)$ by some substitution for $x$. The fact that the polynomial after substitution is irreducible then allows to conclude that the original polynomial itself is irreducible.

As mentioned in \cite[p.49]{sury}, one can reverse the roles of the constant term and the leading coefficient of $f(x)$ to get another version of the Eisenstein criterion. More precisely, {\it if the constant term of $f(x)$ is not divisible by $p$, all the remaining coefficients are divisible by $p$, and the leading coefficient of $f(x)$ is not divisible by $p^2$, then $f(x)$ is irreducible over $\mathbb{Z}$}.

We learn Eisenstein criterion generally at the undergraduate level as a part of our mathematics training. At that time, realizing its power and simplicity, students try to generalize the statement of the criterion and ask the following natural question:
\begin{quote}
Suppose that there exists a prime number $p$ and an integer $k\geq 2$ such that the leading coefficient of $f(x)$ is not divisible by $p$, all the remaining coefficients are divisible by $p^k$, and the constant term is not divisible by $p^{k+1}$. Is $f(x)$ necessarily irreducible over $\mathbb{Z}$?
\end{quote}
The answer is certainly No!. For example, one can have the following factorizations:
\begin{enumerate}
\item[] $x^{2}-p^{2}=(x-p)(x+p)$,\; $x^3-p^{3}=(x-p)(x^2+p x+p^{2})$, etc.
\end{enumerate}
However, the answer could be affirmative if one adds an extra condition connecting $k$ and the degree of $f(x)$, see Theorem \ref{general} below.

\subsection{Dumas Criterion}

The second best known irreducibility criterion based on divisibility of the coefficients by a prime is probably the so called ``Dumas Criterion'', due to Gustave Dumas (1872--1955), a Swiss mathematician. To state this criterion, it is necessary to recall the notion of `Newton diagram' of a polynomial over integers with respect to a given prime number.

Let $p$ be a fixed prime number and let $f(x)\in\mathbb{Z}[x]$ be a polynomial of degree $n\geq 1$.
We refer to \cite[Sec. 2.2.1]{Pra} or \cite[Page 96]{O} for the construction of the Newton diagram of $f(x)$ with respect to $p$. Let
$$f(x)=a_n x^n+a_{n-1}x^{n-1}+\dots +a_1x+a_0, $$
where the constant term $a_0$ is nonzero (otherwise, $f(x)$ would be reducible for $n\geq 2$). Every nonzero coefficient $a_i$ of $f(x)$ can be written in the form
$$a_i=\bar{a}_i p^{\alpha_i},$$
where $\bar{a}_i$ is an integer not divisible by $p$, that is, $\alpha_i$ is the maximum power of $p$ such that $p^{\alpha_i}\mid a_i$. Set
$$X=\{(i,\alpha_i):a_i\neq 0\},$$
call the elements of $X$ as vertices and plot them in the plane. Since $f(x)$ is of positive degree, there are at least two vertices: the {\it initial} vertex $(0,\alpha_0)$ and the {\it terminal} vertex $(n, \alpha_n)$. Note that there is no vertex corresponding to a zero coefficient of $f(x)$. The construction of the {\it Newton diagram of $f(x)$ with respect to $p$} is as follows.

Start with the initial vertex $v_0=(0,\alpha_0)$. Then find the vertex $v_1=(i_1,\alpha_{i_1})$, where $i_1\neq 0$ is the largest integer for which there is no vertex of $X$ below the line through $v_0$ and $v_1$. It may happen that the line segment $v_0 v_1$ joining $v_0$ and $v_1$ contain vertices from $X$ which are different from $v_0$ and $v_1$. Then find the vertex $v_2=(i_2,\alpha_{i_2})$, where $i_2$ ($\neq i_1)$ is the largest integer for which there is no vertex below the line through $v_1$ and $v_2$. Proceed in this way to draw the line segments $v_0v_1$, $v_1v_2$ etc. one by one. The very last line segment is of the form $v_{k-1}v_k$, where $v_k=(n,\alpha_n)$ is the terminal vertex. Then the Newton diagram of $f(x)$ with respect to $p$ consists of the line segments $v_{j-1}v_j$, $1\leq j\leq k$. It has at least one line segment.
We say that a line segment $v_{i-1}v_i$ is {\it simple} if $v_{i-1}$ and $v_i$ are the only points on it with integer coordinates.

We now state the irreducibility criterion by Dumas, a proof of which can be found in \cite[Sec. 2.2]{Pra}. The original proof by Dumas appeared in 1906 in the paper \cite{D}.

\begin{quote}
{\it Dumas criterion.} Let $f(x)\in\mathbb{Z}[x]$ be a polynomial of positive degree. Suppose that there exists a prime $p$ for which the Newton diagram of $f(x)$ consists of exactly one simple line segment. Then $f(x)$ is irreducible over $\mathbb{Z}$.
\end{quote}
Observe that if $p$ satisfies the three conditions of Eisenstein criterion, then the Newton diagram of $f(x)$ with respect to $p$ consists of one simple line segment with end vertices $(0,1)$ and $(n,0)$ and so $f(x)$ is irreducible. Thus Dumas criterion can be considered as a generalization of Eisenstein criterion.

\begin{example}
The Newton diagram of $f(x)=x^4+12$ with respect to $p=2$ consists of one line segment through the initial vertex $(0,2)$ and the terminal vertex $(4,0)$. It contains the point $(2,1)$ with integer coordinates and so Dumas criterion can not be applied with respect to $2$. However, $f(x)$ is Eisenstein for $p=3$ and hence irreducible over $\mathbb{Z}$.
\end{example}

Now let $f(x)=a_nx^n+a_{n-1}x^{n-1}+\dots+a_1 x+a_0\in\mathbb{Z}[x]$. Suppose that there exists a positive integer $k$ and a prime number $p$ such that
\begin{enumerate}
\item[] $p\nmid a_n$, $p^k\mid a_j$ ($0\leq j\leq n-1$) and $p^{k+1}\nmid a_0$.
\end{enumerate}
Then the Newton diagram of $f(x)$ with respect to $p$ consists of exactly one line segment $uv$, where $u=(0,k)$ and $v=(n,0)$. The equation of the line through $u$ and $v$ is $$kX+nY=nk.$$
If $k$ and $n$ are relatively prime, then it can be seen that there is no integer coordinate points on the line segment $uv$ different from $u,v$. So $uv$ is a simple line segment and hence $f(x)$ is irreducible by Dumas criterion. Thus, we have the following result which is related to the question mentioned before.

\begin{theorem}\label{general}
Let $f(x)=a_nx^n+a_{n-1}x^{n-1}+\dots+a_1 x+a_0\in\mathbb{Z}[x]$ and $k$ be a positive integer relatively prime to $n$. Suppose that there exists a prime $p$ such that $p\nmid a_n$, $p^k\mid a_j$ for $0\leq j\leq n-1$ and $p^{k+1}\nmid a_0$. Then $f(x)$ is irreducible over $\mathbb{Z}$.
\end{theorem}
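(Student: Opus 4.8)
The plan is to argue by contradiction using only the $p$-adic valuation $v_p$ (for a nonzero integer $a$, $v_p(a)$ is the largest exponent $e$ with $p^e\mid a$), so that nothing beyond the exponent bookkeeping already implicit in the Newton diagram is needed. Suppose $f=gh$ with $g(x)=\sum_{i=0}^{r}b_i x^i$ and $h(x)=\sum_{j=0}^{s}c_j x^j$ nonconstant, so $r,s\ge 1$ and $r+s=n$. From $a_n=b_rc_s$ and $p\nmid a_n$ I immediately get $p\nmid b_r$ and $p\nmid c_s$, hence $v_p(b_r)=v_p(c_s)=0$; and from $a_0=b_0c_0$ together with the hypotheses $p^k\mid a_0$, $p^{k+1}\nmid a_0$ I get $v_p(b_0)+v_p(c_0)=v_p(a_0)=k$.

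The heart of the argument is to transfer the ``slope $-k/n$'' information encoded in the hypotheses from $f$ to each factor. To this end I would introduce the weighted heights
$$\phi_g(i)=n\,v_p(b_i)+ki,\qquad \phi_h(j)=n\,v_p(c_j)+kj,\qquad \phi_f(m)=n\,v_p(a_m)+km,$$
with the convention $v_p(0)=+\infty$. The hypotheses translate cleanly into $\phi_f(m)\ge kn$ for every $m$, with equality at $m=0$ and $m=n$ (indeed $v_p(a_0)=k$, $v_p(a_n)=0$, and $v_p(a_m)\ge k$ for $0<m<n$). The key claim I aim to establish is that $\min_i\phi_g(i)=kr$ and $\min_j\phi_h(j)=ks$; granting this, $\phi_g(0)\ge kr$ reads $v_p(b_0)\ge kr/n$, and likewise $v_p(c_0)\ge ks/n$. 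Summing and comparing with $v_p(b_0)+v_p(c_0)=k=k(r+s)/n$ forces equality, so $v_p(b_0)=kr/n$. Since $\gcd(k,n)=1$ and $0<r<n$ give $n\nmid kr$, the right-hand side is not an integer, contradicting $v_p(b_0)\in\mathbb{Z}$. This is exactly where the coprimality hypothesis does its work.

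To prove the key claim I would isolate, among the coefficients of $g$, the smallest index $i_0$ at which $\phi_g$ attains its minimum, and similarly the smallest $j_0$ for $\phi_h$, and then examine the coefficient $a_{m_0}$ of $f$ with $m_0=i_0+j_0$. Writing $a_{m_0}=\sum_{i+j=m_0}b_ic_j$ and noting that $v_p(b_ic_j)=\tfrac1n\bigl(\phi_g(i)+\phi_h(j)-km_0\bigr)$, the minimality and the leftmost tie-breaking should make the pair $(i_0,j_0)$ the unique minimizer of $\phi_g(i)+\phi_h(j)$ along this antidiagonal. By the ultrametric property of $v_p$ there is then no cancellation, so $v_p(a_{m_0})=v_p(b_{i_0})+v_p(c_{j_0})$ and hence $\phi_f(m_0)=\min_i\phi_g(i)+\min_j\phi_h(j)$. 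Combined with $\phi_f(m_0)\ge kn=kr+ks$ and the obvious bounds $\min_i\phi_g(i)\le\phi_g(r)=kr$ and $\min_j\phi_h(j)\le\phi_h(s)=ks$, this squeezes both minima to their extreme values, proving the claim.

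The main obstacle, and the only place requiring genuine care, is the uniqueness of the minimizing term in $a_{m_0}$: I must rule out the possibility that several pairs $(i,j)$ on the antidiagonal contribute terms of equal minimal valuation, which could cancel and spoil the identity $v_p(a_{m_0})=v_p(b_{i_0})+v_p(c_{j_0})$. The remedy is precisely the leftmost choice of $i_0$ and $j_0$: for any competing pair with $i<i_0$ one has $\phi_g(i)>\phi_g(i_0)$ by minimality of the index, while $i>i_0$ forces $j<j_0$ and hence $\phi_h(j)>\phi_h(j_0)$, so every competitor has strictly larger valuation. Once this strictness is verified the remaining steps are routine divisibility bookkeeping, and the argument works verbatim for every positive integer $k$ coprime to $n$.
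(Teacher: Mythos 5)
Your proof is correct, and it is a genuinely different route from the one the paper takes for Theorem~\ref{general}. The paper's own proof is two lines long and outsources all the work: it observes that the hypotheses make the Newton diagram of $f(x)$ with respect to $p$ a single segment from $(0,k)$ to $(n,0)$, that $\gcd(k,n)=1$ forces this segment to be simple, and then invokes Dumas' criterion as a black box (citing Prasolov for its proof). What you have done instead is prove the needed special case of Dumas' criterion from scratch: your weighted height $\phi_g(i)=n\,v_p(b_i)+ki$ is exactly the slope bookkeeping of the Newton diagram in disguise, your leftmost-minimizer argument on the antidiagonal $i+j=m_0$ is the standard no-cancellation (ultrametric) step at the heart of the Newton polygon additivity theorem, and the endgame --- $v_p(b_0)=kr/n$, which is not an integer because $\gcd(k,n)=1$ and $0<r<n$ give $n\nmid kr$ --- is precisely where simplicity of the segment enters. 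I checked the delicate points and they hold: the strictness of the minimizer is sound (a competitor with $i<i_0$ has $\phi_g(i)>\phi_g(i_0)$ by the leftmost choice of $i_0$, while $i>i_0$ forces $j<j_0$ and $\phi_h(j)>\phi_h(j_0)$), the convention $v_p(0)=+\infty$ is harmless since $\phi_g(r)=kr<\infty$ guarantees the minima are finite and $b_{i_0}c_{j_0}\neq 0$, the squeeze $\min_i\phi_g(i)+\min_j\phi_h(j)=\phi_f(m_0)\ge kn=kr+ks$ against $\min_i\phi_g(i)\le\phi_g(r)=kr$ and $\min_j\phi_h(j)\le\phi_h(s)=ks$ is airtight, and $a_0\neq 0$ (hence $v_p(b_0),v_p(c_0)$ finite) follows from $p^{k+1}\nmid a_0$. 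Comparing the two: your argument is self-contained and uniform in $k$, proving the full theorem in one stroke, whereas the paper's actual novelty (Sections 2--4) consists of elementary divisibility proofs that handle only $k\in\{2,3,4\}$ with case analyses that lengthen as $k$ grows, the general case being delegated to the literature. The price you pay is the valuation-theoretic formalism, which is essentially the Newton-diagram machinery the authors deliberately set out to avoid for their intended undergraduate audience; the benefit is generality and a cleaner explanation of where coprimality of $k$ and $n$ is truly used.
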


For $k=1$, Theorem \ref{general} is simply Eisenstein criterion. The aim of this article is to give an elementary proof, which is accessible to the undergraduate students, of Theorem \ref{general} for $k\in \{2,3,4\}$ using basic divisibility properties of integers. One can use similar argument for other small values of $k$, but more steps will be involved.
For $k\geq 2$, it can be observed from the Newton diagram of $f(x)$ with respect to $p$ that the condition $p^k\mid a_j$ for $0\leq j\leq n-1$ is much stronger. It can further be relaxed for higher value of $j$. For example, for $k=2$, this condition can be replaced by that $p\mid a_j$ for $j\leq n-1$ and $p^2\mid a_i$ for $0\leq i\leq \lfloor n/2\rfloor$ (see Theorem \ref{main-1}).

\section{For $k=2$}
We start with the following lemma which essentially proves the Eisenstein criterion, but stated in a different way as per our requirement. This result is useful in all the cases of $k\in\{2,3,4\}$.

\begin{lemma} \label{elem-1}
Let $f(x)=a_nx^n+a_{n-1}x^{n-1}+\dots+a_1x+a_0\in \mathbb{Z}[x]$. Suppose that there exists a prime $p$ such that $p\nmid a_n$ and $p\mid a_i$ for $0\leq i\leq n-1$. If $f(x)=g(x)h(x)$ for two nonconstant polynomials $g(x)$, $h(x)$ in $\mathbb{Z}[x]$, then $p$ divides all the coefficients, except the leading ones, of $g(x)$ and $h(x)$. In particular, if $p^2\nmid a_0$, then $f(x)$ is irreducible over $\mathbb{Z}$.
\end{lemma}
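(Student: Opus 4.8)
The plan is to analyze an arbitrary factorization $f(x) = g(x)h(x)$ into nonconstant integer polynomials by tracking, for each factor, the lowest-degree coefficient that is \emph{not} divisible by $p$. Write $g(x) = b_rx^r + \dots + b_1 x + b_0$ and $h(x) = c_sx^s + \dots + c_1 x + c_0$ with $r, s \geq 1$ and $r + s = n$. The relation $a_n = b_r c_s$ together with $p \nmid a_n$ immediately forces $p \nmid b_r$ and $p \nmid c_s$ (here I use that $p$ is prime), so in particular each factor has at least one coefficient coprime to $p$.

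First I would introduce $t$ as the smallest index with $p \nmid b_t$ and $u$ as the smallest index with $p \nmid c_u$; these exist and satisfy $t \leq r$, $u \leq s$ by the previous observation. The heart of the argument is then to examine the single coefficient $a_{t+u}$, which is a genuine coefficient of $f$ since $t + u \leq r + s = n$. Expanding $a_{t+u} = \sum_l b_l c_{t+u-l}$ (with the usual convention that out-of-range coefficients are $0$), every term with $l < t$ is divisible by $p$ because $p \mid b_l$, and every term with $l > t$ is divisible by $p$ because then $t + u - l < u$ forces $p \mid c_{t+u-l}$. The lone remaining term $b_t c_u$ is coprime to $p$ since $p$ is prime and divides neither factor. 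Hence $a_{t+u} \equiv b_t c_u \not\equiv 0 \pmod p$.

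Now I would combine this with the hypothesis $p \mid a_i$ for $0 \leq i \leq n-1$. Since $p \nmid a_{t+u}$, the index $t+u$ cannot lie in the range $\{0, \dots, n-1\}$, so $t + u \geq n$; together with $t + u \leq n$ this gives $t + u = n$, and therefore $t = r$ and $u = s$. Unwinding the definitions of $t$ and $u$, this says exactly that $p$ divides $b_0, \dots, b_{r-1}$ and $c_0, \dots, c_{s-1}$, i.e.\ every coefficient of $g$ and $h$ except the leading ones, which is the first assertion. For the final statement, assume $p^2 \nmid a_0$; since $r, s \geq 1$, the constant terms $b_0$ and $c_0$ are non-leading, so $p \mid b_0$ and $p \mid c_0$ by what was just proved, whence $p^2 \mid b_0 c_0 = a_0$, a contradiction. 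Thus no such factorization exists and $f(x)$ is irreducible.

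The only genuinely delicate step is the choice to look at the coefficient $a_{t+u}$ and to verify that exactly one term in its convolution sum survives modulo $p$; once that ``lowest non-divisible positions add up'' observation is in place, the degree count $t + u = n$ and the concluding contradiction are routine. Compared with the usual reduction-modulo-$p$ proof, this keeps everything inside $\mathbb{Z}$ and uses only that $p$ is prime, matching the elementary spirit that the paper is aiming for.
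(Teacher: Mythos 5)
Your proof is correct and takes essentially the same approach as the paper: both arguments track the smallest index in each factor whose coefficient is not divisible by $p$ and observe that exactly one term of the convolution giving $a_{t+u}$ survives modulo $p$, forcing $t+u=n$ and hence $t=r$, $u=s$. The only difference is organizational---the paper first establishes $p\mid b_0$ and $p\mid c_0$ (via the coefficient $a_r$) before taking minimal indices starting at $1$, whereas you allow the minimal indices to be $0$ and thereby absorb that preliminary step into the single main computation.
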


\begin{proof}
Let $g(x)=b_kx^k+\dots +b_1x+b_0$ and $h(x)=c_{l}x^{l}+\dots+c_1x+c_0$, where $k,l\geq 1$. We first show that $b_0$ and $c_0$ are divisible by $p$. Since $a_n=b_kc_l$ and $p\nmid a_n$, we have $p\nmid b_k$ and $p\nmid c_l$. Since $a_0=b_0c_0$ and $p\mid a_0$, we have $p\mid b_0$ or $p\mid c_0$. We may assume that $p\mid b_0$. Let $r$, $1\leq r\leq k$, be the smallest integer such that $p\nmid b_r$. Considering the coefficient $a_r$ in $f(x)$, we have
$$a_r=b_rc_0+b_{r-1}c_1+\dots +b_0c_r.$$
Since $p\mid a_r$ and $p\mid b_i$ for $0\leq i\leq r-1$, it follows that $p\mid b_rc_0$. Then $p\mid c_0$ as $p\nmid b_r$.

Now consider $r$ as above and let $s$, $1\leq s\leq l$, be the smallest integer such that $p\nmid c_s$. Considering the coefficient $a_{r+s}$ in $f(x)$, we have
$$a_{r+s}=b_{r+s}c_0+\dots +b_{r+1}c_{s-1}+b_rc_s+b_{r-1}c_{s+1}+\dots +b_0c_{r+s}.$$
Note that $p\nmid b_rc_s$ and all the remaining terms in the above expression of $a_{r+s}$ are divisible by $p$. So $p\nmid a_{r+s}$. Since $p\mid a_i$ for $0\leq i\leq n-1$, we get $a_{r+s}=a_n=a_{k+l}$. So $r=k$ and $s=l$.
\end{proof}

We now prove the following result which is an improved version of Theorem \ref{general} for $k=2$.

\begin{theorem}\label{main-1}
Let $f(x)=a_{n}x^{n}+a_{n-1}x^{n-1}+\dots +a_1x+a_0\in\mathbb{Z}[x]$. Suppose that there exists a prime $p$ such that $p\nmid a_{n}$, $p\mid a_i$ for $i\leq n-1$, $p^2\mid a_j$ for $j\leq \lfloor n/2\rfloor$ and $p^3\nmid a_0$.
Then the following hold:
\begin{enumerate}
\item[(1)] If $n$ is odd, then $f(x)$ is irreducible over $\mathbb{Z}$.
\item[(2)] If $n$ is even, then either $f(x)$ is irreducible over $\mathbb{Z}$, or $f(x)$ is a product of exactly two irreducible polynomials in $\mathbb{Z}[x]$ of equal degree which are Eisenstein with respect to $p$.
\end{enumerate}
\end{theorem}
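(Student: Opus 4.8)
The plan is to assume a nontrivial factorization $f=gh$ with $g,h\in\mathbb{Z}[x]$ nonconstant and to determine the degrees and shapes of $g$ and $h$. Write $g(x)=b_kx^k+\dots+b_1x+b_0$ and $h(x)=c_lx^l+\dots+c_1x+c_0$ with $k+l=n$ and, without loss of generality, $k\le l$. Since $p\nmid a_n$ and $p\mid a_i$ for $i\le n-1$, the hypotheses of Lemma \ref{elem-1} hold, so I would first invoke that lemma to conclude that $p\nmid b_k$, $p\nmid c_l$, and that $p$ divides every non-leading coefficient of both $g$ and $h$.

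Next I would sharpen this using the constant term. From $a_0=b_0c_0$ with $p\mid b_0$ and $p\mid c_0$ we get $p^2\mid a_0$; combined with $p^3\nmid a_0$, this forces $p\mid b_0$ with $p^2\nmid b_0$, and likewise $p\mid c_0$ with $p^2\nmid c_0$. Together with the conclusion of Lemma \ref{elem-1}, this shows that each of $g$ and $h$ satisfies the Eisenstein criterion for $p$, and hence each is irreducible over $\mathbb{Z}$.

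The crucial step is to force $k=l$ using the strengthened middle-coefficient hypothesis $p^2\mid a_j$ for $j\le\lfloor n/2\rfloor$. Suppose for contradiction that $k<l$; then $k<n/2$, so $k\le\lfloor n/2\rfloor$ and the hypothesis $p^2\mid a_k$ is available. I would examine
$$a_k=b_kc_0+b_{k-1}c_1+\dots+b_1c_{k-1}+b_0c_k$$
modulo $p^2$. In every term $b_ic_{k-i}$ with $i<k$, the factor $b_i$ is a non-leading coefficient of $g$ (divisible by $p$) and the factor $c_{k-i}$ has index $1\le k-i\le k<l$, so it is a non-leading coefficient of $h$ (also divisible by $p$); thus each such term is divisible by $p^2$. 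The one remaining term $b_kc_0$ has $p\nmid b_k$ and $p\mid c_0$ with $p^2\nmid c_0$, so it is divisible by $p$ but not $p^2$. Hence $a_k\equiv b_kc_0\pmod{p^2}$ and $p^2\nmid a_k$, contradicting $p^2\mid a_k$. Therefore $k=l$, forcing $n=2k$ to be even with $\deg g=\deg h=n/2$. In particular, if $n$ is odd no such factorization exists, so $f$ is irreducible, which is (1); and if $n$ is even, any nontrivial factorization into two factors consists of two Eisenstein irreducibles of equal degree $n/2$.

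To obtain the \emph{exactly two} factors in (2), I would add a short grouping argument: if $f$ had three or more nonconstant irreducible factors, one could collect them into two nonconstant pieces $g,h$ with $f=gh$, and the above would then force the reducible piece to be (Eisenstein and) irreducible, a contradiction. Hence in the even case $f$ is either irreducible or a product of exactly two equal-degree Eisenstein irreducibles, proving (2). I expect the main obstacle to be the careful bookkeeping in the computation of $a_k$: one must verify that, under the assumption $k\le l$, every term except $b_kc_0$ genuinely carries two factors of $p$, and that the index $k$ really lies in the range $\le\lfloor n/2\rfloor$ where the $p^2$ hypothesis applies. The ``exactly two'' conclusion is the other point that requires the explicit grouping step rather than being immediate from the two-factor analysis.
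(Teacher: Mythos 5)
Your proposal is correct and takes essentially the same route as the paper: invoke Lemma \ref{elem-1} to show both factors are Eisenstein (hence irreducible), then inspect the coefficient indexed by the smaller factor's degree (your $a_k$, the paper's $a_s$), in which exactly one term --- leading coefficient of the smaller factor times constant term of the larger --- is divisible by $p$ but not $p^2$, contradicting $p^2\mid a_j$ for $j\le\lfloor n/2\rfloor$. The only cosmetic difference is that you spell out the grouping argument for ``exactly two'' irreducible factors, which the paper leaves implicit in its reduction to a two-factor factorization.
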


\begin{proof}
Suppose that $f(x)=g(x)h(x)$ for some nonconstant polynomials $g(x),h(x)$ in $\mathbb{Z}[x]$, where
\begin{enumerate}
\item[]$g(x)=b_rx^r+b_{r-1}x^{r-1}+\dots+b_1x+b_0$,\\
$h(x)=c_{s}x^{s}+c_{s-1}x^{s-1}+\dots+c_1x+c_0$.
\end{enumerate}
Since $p\nmid a_n$, we have $p\nmid b_r$ and $p\nmid c_s$. By Lemma \ref{elem-1}, $b_i$ and $c_j$ are divisible by $p$ for $0\leq i\leq r-1$ and $0\leq j\leq s-1$. Since $p^3\nmid a_0$, we have $p^2\nmid b_0$ and $p^2\nmid c_0$. Thus $g(x)$ and $h(x)$ both are Eisenstein with respect to $p$ and hence irreducible over $\mathbb{Z}$. In order to complete the proof, it is enough to show that $r=s$.

First suppose that $r> s$. We shall get a contradiction by showing that $p\mid c_s$. Considering the coefficient $a_s$ in $f(x)$, we have
$$a_s=b_sc_0+b_{s-1}c_1+\dots +b_1c_{s-1}+b_0c_s.$$
Since $r>s$, each term in the above expression of $a_s$, different from $b_0c_s$, is divisible by $p^2$. Also, $\lfloor n/2\rfloor=\lfloor (r+s)/2\rfloor\geq s$ implies that $p^2\mid a_s$. Then it follows that $p^2\mid b_0c_s$. Since $p^2\nmid b_0$, we get $p\mid c_s$, a contradiction. If $s>r$, then similar argument holds to get a contradiction that $p\mid b_r$.
\end{proof}

We give examples below to show that both the possibilities in Theorem \ref{main-1} may occur for even degree polynomials in $\mathbb{Z}[x]$.

\begin{example}
(1) For any prime $p$, the polynomial $f(x)=x^2+p^2\in\mathbb{Z}[x]$ satisfies the conditions of Theorem \ref{main-1} with respect to $p$. So it is irreducible over $\mathbb{Z}$.

(2) The polynomial $f(x)=x^4+5x^3+25x^2+50x+150$ satisfies the conditions of Theorem \ref{main-1} with $p=5$. But it is reducible over $\mathbb{Z}$, as $f(x)=(x^2+10)(x^2+5x+15)$.
\end{example}

\section{For $k=3$}

The following elementary result is useful for us. We include a proof of it for the sake of completeness.

\begin{lemma}\label{elem}
Let $p$ be a prime and $u,v$ be integers which are not divisible by $p$. If $p\mid xy$ and $p\mid (ux+vy)$ for some integers $x$ and $y$, then $p\mid x$ and $p\mid y$.
\end{lemma}

\begin{proof}
Since $p$ is a prime and $p\mid xy$, we have $p\mid x$ or $p\mid y$. Assume that $p\mid x$. Then $p\mid (ux+vy)$ implies that $p\mid vy$. Then $p\mid y$ as $p\nmid v$.
\end{proof}

\begin{theorem}\label{main-2}
Let $f(x)=a_{n}x^{n}+a_{n-1}x^{n-1}+\dots +a_1x+a_0\in\mathbb{Z}[x]$, where $n$ is not divisible by $3$. Suppose that there exists a prime $p$ such that the leading coefficient $a_n$ is not divisible by $p$, the remaining coefficients are divisible by $p^3$ and the constant term $a_0$ is not divisible by $p^4$.
Then $f(x)$ is irreducible over $\mathbb{Z}$.
\end{theorem}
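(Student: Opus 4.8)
The plan is to argue by contradiction, mirroring the proof of Theorem~\ref{main-1} but tracking exact powers of $p$. Suppose $f(x)=g(x)h(x)$ with $g(x)=b_rx^r+\dots+b_0$ and $h(x)=c_sx^s+\dots+c_0$ nonconstant, so $r,s\ge 1$ and $r+s=n$. Since $p\nmid a_n=b_rc_s$ we have $p\nmid b_r$ and $p\nmid c_s$, and since $p\mid a_i$ for $i\le n-1$, Lemma~\ref{elem-1} gives $p\mid b_i$ for $i<r$ and $p\mid c_j$ for $j<s$. Writing $v_p$ for the exact power of $p$ dividing a nonzero integer, the relation $a_0=b_0c_0$ together with $p^3\mid a_0$ and $p^4\nmid a_0$ forces $v_p(b_0)+v_p(c_0)=3$ with both summands at least $1$; hence $\{v_p(b_0),v_p(c_0)\}=\{1,2\}$. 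Relabelling $g,h$ if necessary, I may assume $v_p(b_0)=1$ and $v_p(c_0)=2$. In particular both $g$ and $h$ are Eisenstein, hence irreducible, so the entire difficulty is to rule out such a degree split.

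Next I would determine the $p$-adic shape of the factor $h$ whose constant term carries the larger power of $p$. Set $\gamma_j=v_p(c_j)$. Reading off the coefficient identity $a_m=\sum_{i} b_ic_{m-i}$ for successive $m$ and using $p^3\mid a_m$ whenever $m\le n-1$, I would show by induction that $\gamma_j\ge 2$ for $0\le j\le r-1$: for such $m$ every cross term $b_ic_{m-i}$ with $i\ge 1$ has $i<r$, hence $v_p(b_i)\ge 1$ and $v_p(c_{m-i})\ge 2$, so only the term $b_0c_m$ can have valuation below $3$, forcing $v_p(b_0c_m)\ge 3$ and thus $\gamma_m\ge 2$. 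Since $\gamma_s=0$, this already yields $s\ge r$. Feeding this back into the identity for $a_r$—where the unit coefficient $b_r$ now multiplies $c_0$—isolates the two low terms $b_rc_0$ (of valuation $2$) and $b_0c_r$, and $p^3\mid a_r$ forces them to cancel modulo $p^3$; this rules out $r=s$ and pins down $\gamma_r=1$ exactly.

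The crux is to convert these local constraints into the degree identity that contradicts $3\nmid n$. Examining $a_s$ shows that the unique low-valuation contribution must come from $b_rc_{s-r}$, forcing $\gamma_{s-r}=1$ and hence $s-r\ge r$, i.e.\ $s\ge 2r$. Examining $a_{2r}$ (which is legitimate since $n\ge 3r>2r$) then shows that the lone valuation-$1$ term $b_rc_r$ can cancel modulo $p^3$ only if $c_{2r}$ is a unit, which, given $s\ge 2r$, means $s=2r$. Therefore $n=r+s=3r$, so $3\mid n$, contradicting the hypothesis. This is precisely the elementary shadow of the Newton-diagram computation: $g$ and $h$ are forced onto the common slope $-1/r$, and this can be the slope $-3/n$ of $f$ only when $n=3r$.

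The main obstacle I anticipate is the bookkeeping around the unit leading coefficients $b_r$ and $c_s$: these are exactly the terms of valuation $0$ that survive divisibility by $p^3$, so at each stage one must verify that precisely one such low term appears and that the forced cancellation is genuinely available. Lemma~\ref{elem} is the natural tool here, turning a product-plus-sum divisibility into two separate divisibilities at the steps where two terms of equal low valuation must cancel. Keeping the number of such stages small is what makes $k=3$ tractable by hand; the same scheme applies to $k=4$ but with more valuation cases (the constant terms can split as $\{1,3\}$ or $\{2,2\}$), which is presumably why the paper treats the values of $k$ separately.
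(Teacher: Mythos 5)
Your proposal is correct; each step checks out (the induction $v_p(c_j)\ge 2$ for $j\le r-1$, the exact values $v_p(c_r)=v_p(c_{s-r})=1$, the inequality $s\ge 2r$, and the final forcing of $v_p(c_{2r})=0$), and in every case the coefficient index you examine is at most $n-1$, so the divisibility $p^3\mid a_m$ you invoke is available. Up to interchanging the names of the two factors (the paper gives the valuation-$2$ constant term to the factor it calls $g$ and proves $\deg g>\deg h$), your skeleton coincides with the paper's: its Claim \ref{claim-1} is your ``$s\ge r$'' step, its Claim \ref{claim-2} is your induction, and its Claim \ref{claim-3} is your ``$s\ge 2r$'' step, each read off from the same coefficient of $f$. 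The genuine difference is the endgame. The paper spends the hypothesis $3\nmid n$ in the middle of the proof (Claim \ref{claim-3-1}: $r\ge 2s+1$) and then must combine a sum congruence coming from $a_s$ with a product congruence coming from $a_{2s}$ via Lemma \ref{elem} to reach the contradiction $p\mid c_s$. You instead keep exact valuations throughout: once $v_p(c_r)=1$ is pinned, $b_rc_r$ is the unique valuation-$1$ term of $a_{2r}$, and its cancellation forces $c_{2r}$ to be a unit, hence $s=2r$ and $n=3r$, so the hypothesis $3\nmid n$ is used exactly once, at the very end. Your route buys two things: Lemma \ref{elem} becomes unnecessary (every step of yours is a ``the two lowest terms must share the same valuation'' comparison, never a product-plus-sum splitting), and you prove slightly more than irreducibility --- under these hypotheses any factorization must have degrees $r$ and $2r$ with both factors Eisenstein, which is the exact analogue for $k=3$ of part (2) of Theorem \ref{main-1}. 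The only cost is marginally heavier bookkeeping, since you must certify exact valuations rather than lower bounds.
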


\begin{proof}
Suppose that $f(x)$ is reducible over $\mathbb{Z}$. Let $f(x)=g(x)h(x)$ for some nonconstant polynomials $g(x),h(x)$ in $\mathbb{Z}[x]$, where
\begin{enumerate}
\item[] $g(x)=b_rx^r+b_{r-1}x^{r-1}+\dots+b_1x+b_0$,\\
$h(x)=c_{s}x^{s}+c_{s-1}x^{s-1}+\dots+c_1x+c_0$.
\end{enumerate}
Since $p\nmid a_n$, $p\nmid b_r$ and $p\nmid c_s$. By Lemma \ref{elem-1}, $p\mid b_i$ for $0\leq i\leq r-1$ and $p\mid c_j$ for $0\leq j\leq s-1$. Since $p^3\mid a_0$ and $p^4\nmid a_0$, either $b_0=up^2$ and $c_0=vp$, or $b_0=up$ and $c_0=vp^2$ for some integers $u, v$ which are not divisible by $p$. Without loss, we may assume that $b_0=up^2$ and $c_0=vp$.

\begin{claim}\label{claim-1}
$r>s$.
\end{claim}

On the contrary, suppose that $s\geq r$. Considering the coefficient $a_r$ in $f(x)$, we have
$$b_r vp+b_{r-1}c_1+\dots +b_1c_{r-1}+c_r up^2=a_r\equiv 0\mod p^2.$$
Since $b_i$ and $c_i$ are divisible by $p$ for $1\leq i\leq r-1$, it follows that $p^2 \mid b_r vp$. Then $p\nmid v$ implies that $p\mid b_r$, a contradiction.

\begin{claim}\label{claim-2}
$p^2\mid b_l$ for $0\leq l\leq s-1$.
\end{claim}

We shall prove by induction on $l$. This is clear for $l=0$, since $b_0=up^2$. So assume that $1\leq l\leq s-1$ and that $p^2\mid b_i$ for $0\leq i\leq l-1$. The coefficient $a_l$ in $f(x)$ is divisible by $p^3$ and so
$$b_lvp+b_{l-1}c_1+\dots +b_1c_{l-1}+c_lup^2\equiv 0 \mod p^3.$$
Using the induction hypothesis and the fact that $p\mid c_i$ for $1\leq i\leq l$, it follows that $p^3\mid b_lvp$. Then $p\nmid v$ implies that $p^2\mid b_l$.

\begin{claim}\label{claim-3}
$r\geq 2s$.
\end{claim}

Suppose that $r\leq 2s-1$. Since the coefficient $a_r$ in $f(x)$ is divisible by $p^2$, we have
$$b_r vp+b_{r-1}c_1+\dots +b_{r-s+1}c_{s-1}+b_{r-s}c_s\equiv 0 \mod p^2.$$
Note that $r-s\leq s-1$ as $r\leq 2s-1$ by our assumption, and so $p^2\mid b_{r-s}$ by Claim \ref{claim-2}. Since $p\mid c_i$ for $1\leq i\leq s-1$ and $p\mid b_j$ for $j\leq r-1$, it follows that $p^2\mid b_r vp$. Then $p\nmid v$ implies that $p\mid b_r$, a contradiction.

\begin{claim}\label{claim-3-1}
$r\geq 2s+1$.
\end{claim}

By Claim \ref{claim-3}, we have $r\geq 2s$. Since $n=r+s$, the hypothesis that $3\nmid n$ implies $r\neq 2s$. So $r\geq 2s+1$.\\

Now the coefficient $a_s=b_svp+b_{s-1}c_1+\dots +b_1c_{s-1}+c_s up^2$ of $x^s$ in $f(x)$ is divisible by $p^3$. Using Claim \ref{claim-2}, it follows that
\begin{equation}\label{sum}
\bar{b}_sv+c_su\equiv 0 \mod p,
\end{equation}
where $b_s=\bar{b}_s p$. Considering the coefficient $a_{2s}$ of $x^{2s}$ in $f(x)$ which is divisible by $p^2$, we have
$$b_{2s}vp+b_{2s-1}c_1+\dots+b_{s+1}c_{s-1}+b_s c_s\equiv 0 \mod p^2.$$
Note that $p\mid b_{j}$ for $j\leq 2s$ as $r\geq 2s+1$. It follows that $b_s c_s$ is divisible by $p^2$ and this gives
\begin{equation}\label{product}
\bar{b}_s c_s \equiv 0 \mod p.
\end{equation}
Then the congruence relations (\ref{sum}), (\ref{product}) and Lemma \ref{elem} together imply that $p\mid c_s$, a final contradiction to our assumption that $f(x)$ is reducible.  This completes the proof.
\end{proof}

\section{For $k=4$}

\begin{theorem}
Let $f(x)=a_{n}x^{n}+a_{n-1}x^{n-1}+\dots +a_1x+a_0\in\mathbb{Z}[x]$, where $n$ and $4$ are relatively prime. Suppose that there exists a prime $p$ such that the leading coefficient $a_n$ is not divisible by $p$, the remaining coefficients are divisible by $p^4$ and the constant term $a_0$ is not divisible by $p^5$.
Then $f(x)$ is irreducible over $\mathbb{Z}$.
\end{theorem}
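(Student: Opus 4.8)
The plan is to imitate the proof of Theorem \ref{main-2}, the extra valuation of the constant term forcing more cases and one additional layer of book-keeping. Assume $f(x)=g(x)h(x)$ with $g(x)=b_rx^r+\dots+b_0$ and $h(x)=c_sx^s+\dots+c_0$ nonconstant, so $r+s=n$ with $r,s\ge 1$. Since $p\nmid a_n$ we get $p\nmid b_r,c_s$, and Lemma \ref{elem-1} gives $p\mid b_i$ for $i<r$ and $p\mid c_j$ for $j<s$; in particular $p\mid b_0$ and $p\mid c_0$. Because $p^4\mid a_0=b_0c_0$ and $p^5\nmid a_0$, the pair of $p$-adic valuations $(v_p(b_0),v_p(c_0))$ equals one of $(3,1),(2,2),(1,3)$. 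Swapping $g$ and $h$ if necessary, I would treat only the cases $(v_p(b_0),v_p(c_0))=(3,1)$ and $(2,2)$. Note that $\gcd(n,4)=1$ forces $n$ to be odd throughout.

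For the asymmetric case $b_0=up^3$, $c_0=vp$ (with $p\nmid u,v$), the argument runs parallel to Theorem \ref{main-2}. Reading the coefficient $a_r$ modulo $p^2$, in which the term $b_rc_0=b_rvp$ has valuation exactly $1$, first forces $r>s$. Then, inductively using that each $a_l$ is divisible by $p^4$, I would boost the divisibility of the low coefficients of $g$: $p^3\mid b_l$ for $0\le l\le s-1$ and $p^2\mid b_l$ for $0\le l\le 2s-1$. Reading $a_r$ modulo $p^2$ once more now yields the degree inequality $r\ge 3s$, and since $r=3s$ would give $n=4s$ (contradicting $\gcd(n,4)=1$) I would upgrade this to $r\ge 3s+1$. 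The endgame is to extract three congruences modulo $p$ from the coefficients $a_s$, $a_{2s}$, $a_{3s}$, namely $v\beta+uc_s\equiv 0$, $v\gamma+\beta c_s\equiv 0$ and $\gamma c_s\equiv 0$, where $\beta=b_s/p^2$ and $\gamma=b_{2s}/p$, and to cascade through them (in the spirit of Lemma \ref{elem}) to conclude $p\mid c_s$, contradicting $p\nmid c_s$. The only new feature relative to $k=3$ is that the middle coefficient $a_{2s}$ no longer isolates $b_sc_s$ cleanly, so the third coefficient $a_{3s}$ is needed.

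The symmetric case $b_0=up^2$, $c_0=vp^2$ is where the method of Theorem \ref{main-2} breaks down, and is the step I expect to be the main obstacle. Here neither $g$ nor $h$ is Eisenstein, and the inductive boosting fails because the two factors carry equal constant-term valuations. Since $n$ is odd we have $r\neq s$; assume $r>s$. The natural target is the coefficient $a_s=b_0c_s+\sum_{i=1}^{s-1}b_ic_{s-i}+b_sc_0$, in which $b_0c_s=up^2c_s$ has $p$-adic valuation exactly $2$ while $b_sc_0=b_svp^2$ has valuation $\ge 3$. If one could show that the cross sum $\sum_{i=1}^{s-1}b_ic_{s-i}$ is divisible by $p^3$, then $a_s$ would have valuation exactly $2$, contradicting $p^4\mid a_s$. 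The difficulty is precisely this interference term: modulo $p^3$ it equals $p^2$ times the $x^s$-coefficient of $\tilde g\,\tilde h$ taken modulo $p$, where $\tilde g=\sum_{i<r}(b_i/p)x^i$ and $\tilde h=\sum_{j<s}(c_j/p)x^j$. To kill it I would first extract, from the identity $f=gh$ read modulo $p^2$, the relation $b_rx^r\tilde h+c_sx^s\tilde g\equiv 0\pmod p$, that is $\tilde g\equiv -(b_r/c_s)x^{r-s}\tilde h\pmod p$ in $\mathbb{F}_p[x]$; this slope-matching identity shows the offending coefficient vanishes once $r$ is large compared with $s$, and for the remaining range $s<r\le 2s$ one must iterate the same idea modulo higher powers of $p$, using the oddness of $n$ to exclude the balanced configuration $r=s$ at each stage. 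Packaging this cancellation cleanly, while staying within elementary divisibility arguments, is the crux of the proof.
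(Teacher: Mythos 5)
Your asymmetric case $(v_p(b_0),v_p(c_0))=(3,1)$ is correct and coincides, step for step, with Case (1) of the paper's proof: the same boosting ($p^3\mid b_l$ for $l\le s-1$, then $p^2\mid b_l$ for $l\le 2s-1$), the same bound $r\ge 3s+1$ obtained from $\gcd(n,4)=1$, and the same three congruences extracted from $a_s$, $a_{2s}$, $a_{3s}$ and cascaded via Lemma \ref{elem}. The genuine gap is the symmetric case $b_0=up^2$, $c_0=vp^2$, which you leave openly unfinished: your slope-matching identity $c_s\tilde g\equiv -b_rx^{r-s}\tilde h\pmod p$ kills the cross sum $\sum_{i=1}^{s-1}b_ic_{s-i}$ only when $r>2s$, and for the range $s<r\le 2s$ you offer just the phrase ``iterate the same idea modulo higher powers of $p$.'' That range is not vacuous (for instance $r=2s$ with $s$ odd gives $n=3s$ odd, perfectly compatible with $\gcd(n,4)=1$), and the proposed iteration is not an argument: to carry it out one must control the valuations of all coefficients of both factors while repeatedly dividing the identity $f=gh$ by $p$, which is exactly the Newton-diagram bookkeeping of Dumas's proof that this elementary approach is meant to replace. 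As written, your proposal proves the theorem only for factorizations with $r>2s$ in the symmetric case.

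Moreover, the premise on which you abandon the paper's method --- that ``the inductive boosting fails because the two factors carry equal constant-term valuations'' --- is false, and correcting it is precisely how the paper closes this case. One boosts both factors at once: by induction on $l$, $p^2\mid b_l$ and $p^2\mid c_l$ for $0\le l\le\lfloor (s-1)/2\rfloor$. At each step, reading $a_l\bmod p^4$ gives $v\bar{b}_l+u\bar{c}_l\equiv 0\pmod p$, and reading $a_{2l}\bmod p^3$ gives $\bar{b}_l\bar{c}_l\equiv 0\pmod p$ (where $b_l=\bar{b}_lp$, $c_l=\bar{c}_lp$), so Lemma \ref{elem} upgrades both valuations simultaneously; the requirement $2l\le s-1$ is what produces the bound $\lfloor (s-1)/2\rfloor$. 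A parity split on $s$ then finishes with no case analysis on the size of $r$ beyond $r>s$ (which is forced since $n$ is odd): if $s=2k+1$, reading $a_s\bmod p^3$ leaves only the term $up^2c_s$, so $p\mid c_s$, a contradiction; if $s=2k$, reading $a_{2k}\bmod p^3$ gives $\bar{b}_k\bar{c}_k+uc_{2k}\equiv 0\pmod p$ and reading $a_{3k}\bmod p^2$ gives $\bar{b}_kc_{2k}\equiv 0\pmod p$, and since $p\nmid u$ these two force $p\mid c_{2k}=c_s$, again a contradiction. This symmetric two-coefficient induction is the idea missing from your proposal.
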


\begin{proof}
Suppose that $f(x)$ is reducible over $\mathbb{Z}$. Let $f(x)=g(x)h(x)$ for some nonconstant polynomials $g(x),h(x)$ in $\mathbb{Z}[x]$, where
\begin{enumerate}
\item[] $g(x)=b_rx^r+b_{r-1}x^{r-1}+\dots+b_1x+b_0$,\\
$h(x)=c_{s}x^{s}+c_{s-1}x^{s-1}+\dots+c_1x+c_0$.
\end{enumerate}
Then $p\nmid b_r$ and $p\nmid c_s$, since $p\nmid a_n$. By Lemma \ref{elem-1}, $p\mid b_i$ for $0\leq i\leq r-1$ and $p\mid c_j$ for $0\leq j\leq s-1$. Since $p^4\mid a_0$ and $p^5\nmid a_0$, we have the following cases for some integers $u, v$ which are not divisible by $p$:
\begin{enumerate}
\item[] (1) either $b_0=up^3$ and $c_0=vp$, or $b_0=up$ and $c_0=vp^3$.
\item[] (2) $b_0=up^2$ and $c_0=vp^2$.
\end{enumerate}

{\bf Case-(1)}. Without loss, we may assume that $b_0=up^3$ and $c_0=vp$. Applying the argument as in the proof of Claim \ref{claim-1}, we get $r >s$. Then applying similar arguments as in the proof of Claims \ref{claim-2} and \ref{claim-3}, we have the following facts:
$$p^3\mid b_l\text{ for }0\leq l\leq s-1 \text{ and } r\geq 2s.$$
The coefficient $a_s=b_svp+b_{s-1}c_1+\dots +b_1c_{s-1}+c_s up^3$ in $f(x)$ is divisible by $p^4$. It follows that $p^2\mid b_s$ and that
\begin{equation}\label{sum-4-1}
\bar{b}_sv+c_su\equiv 0 \mod p,
\end{equation}
where $b_s=\bar{b}_s p^2$. Since $p\nmid u$ and $p\nmid c_s$, (\ref{sum-4-1}) implies that $p\nmid \bar{b}_s$.

\begin{claim}\label{claim-4}
$p^2\mid b_{s+t}$ for $0\leq t\leq s-1$.
\end{claim}

We prove this by induction on $t$. For $t=0$, we have obtained above that $p^2\mid b_s$. So assume that $1\leq t\leq s-1$ and that $p^2\mid b_{s+i}$ for $0\leq i\leq t-1$. We have
$$a_{s+t}=b_{s+t}vp+b_{s+t-1}c_1+\dots+b_{t+1}c_{s-1}+b_{t}c_s.$$
Note that $b_t$ is divisible by $p^3$ as $t\leq s-1$. Using the induction hypotheses, it follows that all the terms, different from the first one, in the above expression of $a_{s+t}$ are divisible by $p^3$. Since $p^3\mid a_{s+t}$, we get $p^3\mid b_{s+t}vp$ and so $p^2\mid b_{s+t}$ as $p\nmid v$.

\begin{claim}
$r\geq 3s+1$.
\end{claim}

First suppose that $r\leq 3s-1$. Considering the coefficient $a_r$ of $x^r$ in $f(x)$ which is divisible by $p^2$, we have
$$b_r vp+b_{r-1}c_1+\dots +b_{r-s+1}c_{s-1}+b_{r-s}c_s\equiv 0 \mod p^2.$$
Note that $r-s\leq 2s-1$ as $r\leq 3s-1$ by our assumption, and so $p^2\mid b_{r-s}$ by Claim \ref{claim-4}. Since $p\mid c_i$ for $1\leq i\leq s-1$ and $p\mid b_j$ for $j\leq r-1$, it follows that $p^2\mid b_r vp$. Then $p\nmid v$ implies $p\mid b_r$, a contradiction. Thus $r\geq 3s$. Since $n=r+s$, and $4$ and $n$ are relatively prime, we get $r\geq 3s+1$.\\

The coefficient $a_{2s}=b_{2s}vp+b_{2s-1}c_1+\dots+b_{s+1}c_{s-1}+b_{s} c_s$ of $x^{2s}$ in $f(x)$ is divisible by $p^3$. Using Claim \ref{claim-4}, it follows that
\begin{equation}\label{sum-4-2}
\bar{b}_{2s}v+\bar{b}_sc_s\equiv 0\mod p,
\end{equation}
where $b_{2s}=\bar{b}_{2s}p$ and $\bar{b}_s$ is as before. Considering the coefficient $a_{3s}$ in $f(x)$ which is divisible by $p^2$, we have
$$b_{3s}vp+b_{3s-1}c_1+\dots+b_{2s+1}c_{s-1}+b_{2s} c_s\equiv 0 \mod p^2.$$
Note that $p\mid b_{j}$ for $j\leq 3s$ as $r\geq 3s+1$. It follows that $b_{2s} c_s$ is divisible by $p^2$ and this gives
\begin{equation}\label{product-4-1}
\bar{b}_{2s} c_s \equiv 0 \mod p.
\end{equation}
Since $p\nmid v$ and $p\nmid \bar{b}_s$, Lemma \ref{elem} applying to the congruence relations (\ref{sum-4-2}) and (\ref{product-4-1}) gives $p\mid c_s$, a contradiction.  This completes the proof of Case-(1).\\

{\bf Case-(2)}. Here $b_0=up^2$ and $c_0=vp^2$. Without loss, we may assume that $r\geq s$. Since $4$ and $n$ are relatively prime, we must have $r>s$.

\begin{claim}
$b_l$ and $c_l$ are divisible by $p^2$ for $0\leq l\leq \lfloor (s-1)/2 \rfloor$.
\end{claim}

We shall prove by induction on $l$. This is clear for $l=0$, since $b_0=up^2$ and $c_0=vp^2$. So assume that $1\leq l\leq \lfloor (s-1)/2 \rfloor$ and that $b_i,c_i$ are divisible by $p^2$ for $0\leq i\leq l-1$. The coefficient $a_l$ in $f(x)$ is divisible by $p^4$ and so
$$b_lvp^2+b_{l-1}c_1+\dots +b_1c_{l-1}+c_lup^2\equiv 0 \mod p^4.$$
Using the induction hypothesis, we get
\begin{equation}\label{sum-4-3}
\bar{b}_{l}v+\bar{c}_l u\equiv 0\mod p,
\end{equation}
where $b_l=\bar{b}_l p$ and $c_l=\bar{c}_l p$. For the coefficient $a_{2l}$ in $f(x)$, we have
$$a_{2l}=b_{2l}vp^2+b_{2l-1}c_1+\dots +b_{l+1}c_{l-1}+b_l c_l+b_{l-1}c_{l+1}+\dots+b_1c_{2l-1}+c_{2l}up^2.$$
Since $a_{2l}\equiv 0 \mod p^3$, again using the induction hypothesis, it follows that
\begin{equation}\label{product-4-2}
\bar{b}_{l}\bar{c}_l \equiv 0\mod p.
\end{equation}
Then (\ref{sum-4-3}), (\ref{product-4-2}) and Lemma \ref{elem} together imply that $p\mid\bar{b}_l$ and $p\mid \bar{c}_l$ and so the claim follows.\\

If $s$ is odd, say $s=2k+1$ for some $k$, then $\lfloor (s-1)/2 \rfloor = k$. Considering the coefficient $a_{s}=a_{2k+1}$ in $f(x)$, we have
$$b_{2k+1}vp^2+b_{2k}c_1+\dots +b_{k+1}c_{k}+b_{k}c_{k+1}+\dots +b_1c_{2k}+c_{2k+1} up^2 \equiv 0\mod p^3.$$
This gives $p\mid c_{2k+1}$, that is, $p\mid c_s$, a contradiction.

If $s$ is even, say $s=2k$ for some $k$, then $\lfloor (s-1)/2 \rfloor = k-1$. For the coefficient $a_s=a_{2k}$ in $f(x)$, we have
$$a_{2k}=b_{2k}vp^2+b_{2k-1}c_1+\dots+b_{k+1}c_{k-1}+b_{k}c_k+b_{k-1}c_{k+1}+\dots +b_{1}c_{2k-1}+c_{2k}up^2.$$
Since $r\geq s+1= 2k+1$ and $a_s=a_{2k}\equiv 0\mod p^3$, it follows that
\begin{equation}\label{sum-4-4}
\bar{b}_k\bar{c}_k+c_{2k}u\equiv 0\mod p,
\end{equation}
where $b_k=\bar{b}_k p$ and $c_k=\bar{c}_k p$. Now, for the coefficient $a_{3k}$ in $f(x)$, we have
$$a_{3k}=b_{3k}vp^2+b_{3k-1}c_1+\dots+b_{2k+1}c_{k-1}+b_{2k}c_k+\dots +b_{k+1}c_{2k-1}+b_kc_{2k}.$$
Each term, different from the last one, in the above expression is divisible by $p^2$. Since $a_{3k}\equiv 0\mod p^2$, we get $p^2\mid b_k c_{2k}$ and so
\begin{equation}\label{product-4-3}
\bar{b}_kc_{2k}\equiv 0\mod p.
\end{equation}
Then, as $p\nmid u$, the congruence relations (\ref{sum-4-4}) and (\ref{product-4-3}) give $p\mid c_{2k}$, that is, $p\mid c_s$, a contradiction. This completes the proof.
\end{proof}

\end{document}